\newtheorem{theorem}{Theorem}
\def\hm#1{#1\nobreak\discretionary{}{\hbox{\ensuremath{#1}}}{}}
\def\Z{{\mathbb Z}}
\def\N{{\mathbb N}}
\def\taug{{\mathfrak t}}
\def\p{{\mathfrak p}}
\def\eps{\varepsilon}
\def\res{\mathop{\mathrm{res}}}
\def\le{\leqslant}
\def\ge{\geqslant}
\begin{document}

\title[Dirichlet divisor problem on Gaussian integers]{Dirichlet divisor problem \\ on Gaussian integers}
\author{Andrew V. Lelechenko}
\address{I.~I.~Mechnikov Odessa National University}
\email{1@dxdy.ru}

\keywords{Riemann zeta function, exponent pairs, divisor function, Gaussian integers}

\subjclass[2010]{
11A25, 
11M06, 
11N37, 
11R11, 
11Y70
}

\begin{abstract}
We improve existing estimates of moments of the Riemann zeta function.
As a consequence, we are able to derive new estimates for the asymptotic behaviour of $\sum_{N \alpha \le x} \taug_k(\alpha)$, where~$N$ stands for the norm of a complex number and $\taug_k$ is the $k$-di\-men\-sion\-al divisor function on Gaussian integers.
\end{abstract}

\maketitle

\section{Introduction}

Define the divisor function $\tau$ and its generalisation, the $k$-dimensional divisor function $\tau_k$ as follows:
$$
\tau(n) = \sum_{d \mid n \atop d > 0} 1,
\qquad
\tau_k(n) = \sum_{d_1\cdots d_k = n \atop d_1,\ldots,d_k > 0} 1,
$$
where $n$ is a non-zero integer.
That said, $\tau \equiv \tau_2$.

One can study asymptotic properties of summatory functions
$$
\sum_{0<n \le x} \tau(n) \qquad \text{and} \qquad \sum_{0<n \le x} \tau_k(n).
$$
It can be shown (see \cite[(6.27)]{kratzel1988}) that for growing $x \to \infty$ we have
\begin{align*}
\sum_{0<n \le x} \tau(n) &= x \ln x + (2 \gamma - 1) x + O(x^{\theta_2}),
\\
\sum_{0<n \le x} \tau_k(n) &= x P_k(\ln x) + O(x^{\theta_k}),
\end{align*}
where $\gamma \approx 0.577$ denotes the Euler–Mascheroni constant, $P_k(y)$ is a fixed univariate polynomial of degree $k-1$, and $\theta_k$ is a positive real. The quest of deriving precise estimates for $\theta_k$ is one of the central problems of multiplicative number theory, known as Dirichlet divisor problem. It has been widely studied by various authors: Dirichlet has proved that $\theta_2 \le 1/2$, Vorono{\"\i} improved this estimate to $\theta_2 \le 1/3+\eps$ \cite{voronoi1903,voronoi1904}, and more modern developments can be found in \cite{ivic1989,kolesnik1981,titchmarsh1986}. Namely,
$$
\theta_2 \le 131/416,
\qquad
\theta_3 \le 43/96,
\qquad
\theta_k \le {k-1 \over k+2}, \quad k \ge 4.
$$

It is natural to extend the notion of divisor functions from integers to other unique factorisation domains such as rings of quadratic integers. Let $R$ be a principal ring of imaginary quadratic integers. Then it is possible to define divisor functions $\taug, \taug_k \colon R \setminus {0} \to \N$ as
$$
\taug(\alpha) = \sum_{d \mid \alpha \atop d \in R/U(R)} 1,
\qquad
\taug_k(\alpha) = \sum_{d_1\cdots d_k \sim \alpha \atop d_1,\ldots,d_k \in R/U(R)} 1,
$$
where $\alpha \sim \beta$ iff $\alpha \mid \beta$ and $\beta \mid \alpha$, and $U(R)$ is a group of units of $R$.

Further, if $R$ is also a Euclidean ring equipped
with norm $N\colon R \hm\to \N$, one can learn asymptotic properties of
$\sum_{0<N \alpha \le x} \taug_k(\alpha)$.
Lai Dyk Thin~\cite{thin1965} has proved that
\begin{align}
\label{eq:thin1}
\sum_{0<N\alpha \le x} \taug (\alpha) &= l_1 x \ln x + l_2 x + O(x^{3/5+\eps}),
\\
\label{eq:thin2}
\sum_{0<N\alpha \le x} \taug_k (\alpha) &= x L_k(\ln x) + O(x^{1-1/(k+1)+\eps}),
\end{align}
where constants $l_1$, $l_2$ and polynomial $L_k$ of degree $k-1$ depend only on $R$.

The aim of following notes is to improve the error term in \eqref{eq:thin1} and~\eqref{eq:thin2} in the specific case of Gaussian integers $\Z[i]$. Namely, from now on
$$
\taug(\alpha) = \sum_{d \mid \alpha \atop \arg d \in [0,\pi/2)} 1,
\qquad
\taug_k(\alpha) = \sum_{d_1\cdots d_k \sim \alpha \atop \arg d_1,\ldots,\arg d_k \in [0,\pi/2)} 1.
$$
and $N(a+bi) = a^2 + b^2$. In order to obtain this result we improve known estimates
of moments of the Riemann zeta function.

\section{Moments of $\zeta$}

As usual $\zeta(s)$ is the Riemann zeta function.
Real and imaginary com\-po\-nents of the complex~$s$ are denoted as $\sigma:=\Re s$ and~$t:=\Im s$, so~$s=\sigma+it$.

Denote by $M(A)$ a real function such that
\begin{equation}\label{eq:M-def}
\int_1^T |\zeta(1/2+it)|^A \, dt \ll T^{M(A)+\eps}.
\end{equation}

Estimates of the Riemann zeta function on critical line $\sigma = 1/2$
are crucial for many applications. The best known result can be found
in~\cite[Th. 8.3]{ivic2003}. The following theorem improves it for $A>12$.

\begin{theorem}\label{th:M(A)}
The following choice of $M$ is valid and satisfies \eqref{eq:M-def}:
\begin{equation}
\label{eq:M-estimate}
M(A) = \begin{cases}
1 + (A - 4) / 8, & 4\le A\le12, \\
1 + \max\bigl\{ {13\over 84} (A-6), E(A) \bigr\}, & 12 < A\le C, \\
1 + {13\over 84} (A-6), & C< A,
\end{cases}
\end{equation}
where
\begin{align}
\label{eq:M(A)-rat-obj}
E(A) &= \inf_{(k,l) \in P} \left\{ {l\over k} \biggm| (4-A)k+4l+2\ge 0 \right\},
\\
\label{eq:C}
C &= \inf_{(k,l) \in P} \left\{ {4k+4l+2 \over k} \biggm| 1-k-{16\over13} l \ge 0 \right\}
= {16645467 \over 972266} = 17.1\ldots
\end{align}
and $P$ denotes the set of exponent pairs,
defined in accordance to Krät\-zel \cite[Ch. 2]{kratzel1988}.
\end{theorem}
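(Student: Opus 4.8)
The plan is to prove the three cases of \eqref{eq:M-estimate} separately; the range $4\le A\le12$ is the classical moment interpolation (already essentially \cite[Th.~8.3]{ivic2003}), and the novelty lies in the exponent-pair argument for $A>12$. For $4\le A\le12$ I would start from Ingham's fourth-moment bound $\int_1^T|\zeta(1/2+it)|^4\,dt\ll T^{1+\eps}$ and Heath-Brown's twelfth-moment bound $\int_1^T|\zeta(1/2+it)|^{12}\,dt\ll T^{2+\eps}$, so that $M(4)=1$ and $M(12)=2$ are admissible values in \eqref{eq:M-def}. Hölder's inequality makes $A\mapsto\log\int_1^T|\zeta(1/2+it)|^A\,dt$ convex, so for $A=(1-\vartheta)\cdot4+\vartheta\cdot12$ with $\vartheta=(A-4)/8$ one gets $\int_1^T|\zeta(1/2+it)|^A\,dt\ll T^{(1-\vartheta)+2\vartheta+\eps}=T^{1+(A-4)/8+\eps}$, which is the first line.

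For $A>12$ I would split $[1,T]$ into the dyadic level sets $\E_V=\{t\in[1,T]:V<|\zeta(1/2+it)|\le 2V\}$, with $V$ ranging over powers of $2$ up to $T^{13/84+\eps}$ (the subconvexity bound $\zeta(1/2+it)\ll t^{13/84+\eps}$), so that $\int_1^T|\zeta(1/2+it)|^A\,dt\ll T^\eps\sum_V V^A|\E_V|$. Two estimates for $|\E_V|$ feed the argument. The first is the fourth-moment bound $|\E_V|\ll T^{1+\eps}V^{-4}$. The second, used for the larger $V$, comes from exponent pairs: replacing $\zeta(1/2+it)$ by a Dirichlet polynomial $\sum_{n\le\sqrt{t/(2\pi)}}n^{-1/2-it}$ via the approximate functional equation, one isolates a dyadic block $M\le\sqrt t$ carrying $\gg VT^{-\eps}$ of the mass, so that $\max_{M\le X\le 2M}\bigl|\sum_{M<n\le X}n^{-it}\bigr|\gg V\sqrt M\,T^{-\eps}$; counting the well-spaced $t$ with this property by pairing the pointwise exponent-pair bound $\sum_{n\sim M}n^{-it}\ll(t/M)^kM^l$ with a large-sieve mean-value inequality yields, for every $(k,l)\in P$, a bound of the shape $|\E_V|\ll T^\eps\,T^{\alpha(k,l)}V^{-\beta(k,l)}$ with explicit rational $\alpha,\beta$.

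Substituting these into $T^\eps\sum_V V^A|\E_V|$ and splitting the range of $V$ where the two bounds for $|\E_V|$ cross, I expect the small-$V$ part (carried by the moment estimates together with the pointwise bound) to contribute $\ll T^{1+\frac{13}{84}(A-6)+\eps}$ and the large-$V$ part to contribute $\ll T^{1+l/k+\eps}$, the geometric series in $V$ converging at this second rate precisely when $(4-A)k+4l+2\ge0$. Summing the two terms and taking the infimum over admissible $(k,l)\in P$ gives $M(A)\le1+\max\{\tfrac{13}{84}(A-6),E(A)\}$ for $A>12$, since $\inf_{(k,l)}\max\{1+\tfrac{13}{84}(A-6),\,1+l/k\}=1+\max\{\tfrac{13}{84}(A-6),E(A)\}$. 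For the constant $C$ I would observe that, at the threshold $A=(4k+4l+2)/k$ attached to a pair $(k,l)\in P$, the exponent-pair term undercuts the linear one, $l/k\le\tfrac{13}{84}\bigl((4k+4l+2)/k-6\bigr)$, exactly when $1-k-\tfrac{16}{13}l\ge0$; hence $C$, the least such threshold over $P$, is the value of $A$ beyond which only the linear term survives, and evaluating the infimum \eqref{eq:C} at the vertex of $P$ produced by the relevant chain of $A$- and $B$-processes \cite[Ch.~2]{kratzel1988} gives $C=16645467/972266$.

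The moment interpolation and the final bookkeeping for $C$ are routine. The hard part will be the exponent-pair large-values estimate for $|\E_V|$ with sharp exponents $\alpha(k,l),\beta(k,l)$: one must balance the diagonal term from the mean-value inequality against the off-diagonal term from the exponent-pair bound for the exponential sum, optimize over the length $M$ of the extracted block, and keep $\alpha,\beta$ in a shape for which the subsequent minimisation over the piecewise-linear exponent-pair region $P$ collapses to the clean rational objective \eqref{eq:M(A)-rat-obj}.
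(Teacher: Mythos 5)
Your overall architecture (dyadic decomposition by the size of $|\zeta(1/2+it)|$, two competing large-values bounds, the subconvexity cap $V\ll T^{13/84+\eps}$, and the threshold computation showing $l/k\le\frac{13}{84}\bigl((4k+4l+2)/k-6\bigr)$ iff $1-k-\frac{16}{13}l\ge0$, which is indeed where $C$ comes from) matches the paper, which runs the same argument in Ivi\'c's discrete $R$--$V$ notation. The first case (H\"older interpolation between the fourth and twelfth moments) is also exactly the paper's argument.

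The genuine gap is your first estimate for the level sets. You take the Chebyshev consequence of the fourth moment, $|\E_V|\ll T^{1+\eps}V^{-4}$, but the paper's proof (via \cite[Th.~8.2]{ivic2003}) uses the large-values bound $R\ll T^{1+\eps}V^{-6}+T^{1+l/k+\eps}V^{-2(1+2k+2l)/k}$, and the exponent $-6$ in the first term is essential: summing $V^A\cdot T^{1+\eps}V^{-6}$ up to $V\sim T^{13/84}$ gives $T^{1+\frac{13}{84}(A-6)+\eps}$, whereas your $V^{-4}$ input gives only $T^{1+\frac{13}{84}(A-4)+\eps}$ from that regime (or, after balancing the two bounds at their crossing point, an exponent of the shape $1+\frac{l}{4l+2}(A-4)$), both strictly weaker than \eqref{eq:M-estimate} --- e.g.\ for $A=13$ this loses against the tabulated value $2.134\ldots$. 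The $V^{-6}$ estimate is not a moment bound in disguise (the sixth moment is not known to be $\ll T^{1+\eps}$); it is a Hal\'asz--Montgomery-type large-values theorem, and citing or reproving it is precisely the ingredient your sketch omits. Relatedly, your assertion that the ``moment-estimate'' part of the sum contributes $\ll T^{1+\frac{13}{84}(A-6)+\eps}$ is unsupported by the inputs you allow yourself; and the second, exponent-pair large-values bound with the exact exponents $T^{1+l/k}V^{-2(1+2k+2l)/k}$, which you defer as ``the hard part'', is exactly what \cite[Th.~8.2]{ivic2003} supplies and what the clean objective \eqref{eq:M(A)-rat-obj} is tailored to. With that theorem quoted (both terms), your bookkeeping for $12<A\le C$ and for $A>C$ would go through essentially as in the paper.
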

\begin{proof}
The first case follows from the estimates
$\int_1^T |\zeta(1/2+it)|^4\, dt \hm\ll T \log^4 T$ by Ingham~\cite{ingham1926} and $\int_1^T |\zeta(1/2+it)|^{12}\, dt \ll T^2 \log^{17} T$ by Heath-Brown~\cite{heathbrown1978}.
Precisely, let us denote $\xi(t) = |\zeta(1/2+it)|$. By Hölder inequality we have
\begin{multline*}
\int_1^T \xi^A(t) \, dt
=
\int_1^T \xi^{(12-A)/2}(t) \xi^{(3A-12)/2}(t)\, dt
\ll
\\
\ll
\left(\int_1^T \xi^{q_1(12-A)/2}(t)\,dt\right)^{1/q_1}
\left(\int_1^T \xi^{q_2(3A-12)/2}(t)\, dt\right)^{1/q_2}
\end{multline*}
for $1/q_1+1/q_2=1$. Taking $q_1=8/(12-A)$ and $q_2=8/(A-4)$ we get
\begin{multline*}
\int_1^T \zeta^A (1/2+it)\, dt
\ll \\ \ll
\left(\int_1^T \xi^4 (t)\, dt\right)^{(12-A)/8}
\left(\int_1^T \xi^{12} (t)\, dt\right)^{(A-4)/8}
\ll
\\
\ll
T^{(12-A)/8} \log^{4(12-A)/8} T
\cdot
T^{2(A-4)/8} \log^{17(A-4)/8} T
=\\=
T^{1+(A-4)/8} \log^{(13A-20)/8} T.
\end{multline*}

\medskip

Consider the second case. Denoting $R$ and $V$ as in Ivić~\cite[(8.6)]{ivic2003}. It is enough to show that for $A>12$ and for every exponent pair $(k,l) \in P$ such that $(4-A)k+4l+2\ge0$ we have
\begin{equation}\label{eq:second-case}
R \ll T^{1+\max\{{13 \over 84}(A-6), l/k\} + \eps} V^{-A}.
\end{equation}
But by \cite[Th.~8.2]{ivic2003} we have
$$
R
\ll T^{1+\eps} V^{-6}  + T^{1+l/k+\eps} V^{-2(1+2k+2l)/k}.
$$
By condition on $(k,l)$ we know that $V^{-2(1+2k+2l)/k} \ll V^{-A}$, so
$$
R \ll T^{1+\eps} V^{-A} (V^{A-6} + T^{l/k}).
$$
But definitely since $\zeta(1/2+it)\ll t^{13/84+\eps}$ by Bourgain~\cite{bourgain2017} we have~$V \hm\ll T^{13/84+\eps}$, which completes the proof of~\eqref{eq:second-case}.

\medskip

Now let us investigate the third case. Let $A>C$. Then by definition of $C$ there is an exponent pair $(k,l)$ such that $1-k-16l/13 \ge 0$ and~$(4k+4l+2)/k < A$. So by \cite[Th.~8.2]{ivic2003} we have
\begin{multline*}
R
\ll T^{1+\eps} V^{-6} (1 + T^{l/k} V^{-(-2k+4l+2)/k})
\ll \\
\ll \begin{cases}
T^{1+\eps} V^{-6}, & V \gg T^c,\\
T^{1+l/k+\eps} V^{-(4k+4l+2)/k}, & \text{otherwise},
\end{cases}
\end{multline*}
where $c = l / (-2k+4l+2)$. One can check that condition $1-k\hm-16l/13 \ge 0$ implies $c\le 13/84$.

It is enough to prove that
$$
S := \sum_{r\le R} |\zeta (1/2+it_r)|^A \ll T^{1 + {13 \over 84} (A-6) + \eps},
$$
where $|t_r| \le T$, $|t_r-t_s| \ge 1$ for $1\le r \ne s \le R$.
Cf. \cite[(8.58)]{ivic2003}.

Again since $\zeta(1/2+it)\ll t^{13/84+\eps}$ we can split $\{t_r\}_{r=1}^R$ into $1\hm+\lfloor{13\over84} \log T \rfloor$ disjoint subsets $\{t_{j,r_j}\}_{r_j=1}^{R_j}$ such that for every $t_{j,r_j}$ we have~$V_j \hm{:=} 2^j \le |\zeta(1/2+it_{j,r_j})| \le 2^{j+1}$. Then
$$
S \ll \sum_{j=0}^{\lfloor{13\over84} \log T \rfloor} \sum_{r\le R_j} 2^{Aj}
\ll \left(\sum_{j=0}^{c\log T} + \sum_{j=c\log T}^{\lfloor{13\over84} \log T \rfloor}\right) R_j 2^{Aj}  =: S_2 + S_1.
$$
Here by choice of $c$ for $j\ge c\log T$ we have $R_j \ll T^{1+\eps} 2^{-6j}$, so
$$
S_1 \ll T^{1+\eps} \sum_{j=c\log T}^{\lfloor{13\over84} \log T \rfloor} 2^{(A-6)j} \ll T^{1+{13\over84} (A-6) + \eps}.
$$
On the other side for $j\le c\log T$ we have $R_j \ll T^{1+l/k+\eps} 2^{-(4k+4l+2)j/k}$, so
$$
S_2 \ll  T^{1+l/k+\eps} \sum_{j=0}^{c\log T} 2^{(A-(4k+4l+2)/k)j}
\ll T^{1+l/k+(A-(4k+4l+2)/k)c+\eps}.
$$
But
\begin{multline*}
l/k+(A-(4k+4l+2)/k)c
= l/k+(A-6-(-2k+4l+2)/k)c
=\\= l/k+(A-6)c-l/k
= (A-6)c \le {13\over84}(A-6),
\end{multline*}
which completes the proof.
\end{proof}

To apply Theorem~\ref{th:M(A)} efficiently, we need a method to evaluate infimums of form \eqref{eq:M(A)-rat-obj} and \eqref{eq:C} over exponent pairs. We have developed such framework, whose initial version has been described in \cite{lelechenko2013-acta}. Since then the framework has been developed further and
released as a {\tt exp-pairs} package \cite{exp-pairs}. For instance, an estimate of $C$,
given in \eqref{eq:C}, corresponds to the choice
$$
(k, l) = BA (A (BA A)^2)^2 A^2 (BA)^2 A BA (13/84, 55/84),
$$
where $A$ and $B$ stand for application of $A$- and $B$-process from Krät\-zel~\cite[Ch. 2]{kratzel1988}, and $(13/84, 55/84)$ is an exponent pair by Bourgain~\cite{bourgain2017}.

We computed Table~\ref{t:M_13_14_15} as a reference for estimates of $M(A)$ provided by Theorem~\ref{th:M(A)} for integer $A \in (12,C)$.

\begin{table}
\center\begin{tabular}{cp{2.4cm}p{8.8cm}}
$A$ & $M(A)$ & Exponent pair for $E(A)$ \\
13\vphantom{$\Biggl($}
& 2.134766\ldots
& $\displaystyle{BAA^3(BA)^2A^2BAA(BA)^5A^5BA(A(BA)^2A)^2 \cdot{} \atop {}\cdot BA((BA)^2A)^2(ABA)^3BAA^4(BA)^7A^4BAA^5\cdots}$ \\
14\vphantom{$\Biggl($}
& $\displaystyle{1117297289 \over 491431296}$
& $\displaystyle{BAA^2BA(BAA)^5((BA)^2A)^2A(BA)^3A^3 \cdot{} \atop {}\cdot (ABA)^2 (1/6,2/3)}$ \\
15\vphantom{$\Biggl($}
& $\displaystyle{61902400787 \over 25629743097}$
& $\displaystyle{(BAA^2)^2(BA)^2A^4(BA)^4ABA(A(BA)^3A)^2 \cdot{} \atop {}\cdot (BA)^5A (1/6,2/3)}$
\\
16\vphantom{$\Biggl($}
& $\displaystyle{2.558254\ldots}$
& $\displaystyle{BA A \bigl((BA)^2 A BA A (BA)^2 A^2 (BA)^4\bigr)^\infty}$
\\
17\vphantom{$\Biggl($}
& $\displaystyle{7321 \over 2708}$
& $\displaystyle{(BA A)^3 A^2 (13/84, 55/84)}$
\end{tabular}
\par~
\caption{Values of $M(A)$ for $A=13,14,15,16,17$.}
\label{t:M_13_14_15}
\end{table}

\section{Moments of $Z$}

Let us briefly recap key properties of Gaussian integers.
The ring~$\Z[i]$ consists of $\alpha = a+bi$ for integer $a$ and $b$.

There are four units of the ring: $U(\Z[i]) = \{ 1, i, -1, -i \}$.
For any~$\alpha \hm\ne 0$ its orbit under action of the unit group consists
of four elements, one per each quadrant. We will use an element of the orbit
from the first quadrant (such that $\arg\alpha \in [0,\pi/2]$) as
a canonical representative.

The ring is equipped with norm $N(a+bi) = a^2+b^2$, which is
a homomorphism of multiplicative group: $N(\alpha \cdot \beta) = N(\alpha) \cdot N(\beta)$.
The ring is Euclidean and principal, so it is a unique factorisation domain.

Gaussian integer $\p$ is prime if and only if one of the following cases has place:
\begin{itemize}
\item $\p \sim 1+i$,
\item $\p \sim p$, where $p \equiv 3 \pmod 4$,
\item $N(\p) = p$, where $p \equiv 1 \pmod 4$.
\end{itemize}
In the last case there are exactly two non-associated $\p_1$ and~$\p_2$ such that $N(\p_1) \hm= N(\p_2) = p$.
See \cite[\S34]{gauss1832}.

Let $Z(s)$ denote the Dedekind zeta function of $\Z[i]$, which is a Gaussian analogue of the Riemann zeta function. Namely,
$$
Z(s) = \sum_{\alpha \ne 0 \atop \arg\alpha \in [0,\pi/2)} N(\alpha)^{-s}.
$$

Let $\beta$ be the Dirichlet beta function,
$$
\beta(s) = \sum_{n\ge0} {(-1)^n \over (2n+1)^{-s}}.
$$
Converting Dirichlet sums to Euler products and back, we have
\begin{align*}
\label{eq:hecke-product}
Z(s)
& = \prod_{\p \atop \arg\p\in[0,\pi/2)} (1-N(\p)^{-s})^{-1}
=\\ \nonumber
& = {1 \over 1-2^{-s} }
  \prod_{p \equiv 3 \!\!\!\! \pmod 4} {1 \over 1-p^{-2s}}
  \prod_{p \equiv 1 \!\!\!\! \pmod 4} {1 \over (1-p^{-s})^2}
=\\ \nonumber
& = \prod_p {1 \over 1-p^{-s}}
  \prod_{p \equiv 3 \!\!\!\! \pmod 4} {1 \over 1+p^{-s}}
  \prod_{p \equiv 1 \!\!\!\! \pmod 4} {1 \over 1-p^{-s}}
=\\ \nonumber
& = \zeta(s) \beta(s).
\end{align*}

Denote by $I(A)$ a real function such that
\begin{equation*}
\int_1^T |Z(1/2+it)|^A \, dt \ll T^{I(A)+\eps}.
\end{equation*}

Firstly, $I(2) = 1$, because
\begin{align*}
\int_1^T \!\!\!\! |Z(1/2+it)|^2 \, dt
&\ll \left( \int_1^T \!\!\!\! |\zeta(1/2+it)|^4 \, dt  \right)^{\!\! 1/2} \!\!
  \left( \int_1^T \!\!\!\! |\beta(1/2+it)|^4 \, dt  \right)^{\!\! 1/2}
\!\!\!\!\!\! \ll \\
&\ll T^{(1+\eps)/2} T^{(1/2+\eps)/2}
= T^{1+\eps},
\end{align*}
where we applied estimates for fourth moments from Montgomery \cite[Th. 10.1]{montgomery1971}.

Secondly, $I(3) = 5/4$, because
\begin{align*}
\int_1^T \!\!\!\! |Z(1/2+it)|^3 \, dt
&\ll \left( \int_1^T \!\!\!\! |\zeta(1/2+it)|^{12} \, dt  \right)^{\!\! 1/4} \!\!
  \left( \int_1^T \!\!\!\! |\beta(1/2+it)|^4 \, dt  \right)^{\!\! 3/4}
\!\!\!\!\!\! \ll \\
&\ll T^{(2+\eps)/4} T^{3(1+\eps)/4}
= T^{5/4+\eps},
\end{align*}
where the estimate for the twelfth moment is by Ingham~\cite{ingham1926}.

Further, estimates for higher moments of Dedekind zeta function are given
by the following theorems.

\begin{theorem}
\begin{equation}
\label{eq:I-estimate}
I(A) = \begin{cases}
({13\over84} + (M(D)-{8\over21})/D) A + {8\over21}
\\ \qquad \qquad \qquad \,\,\,\, = 0.2896 A + {8\over21},
& 4\le A\le D, \\
{13\over 84} A + M(A),
& D\le A.
\end{cases}
\end{equation}
where $D=12.4868\ldots$
\end{theorem}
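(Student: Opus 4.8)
The plan is to combine the factorisation $Z(s)=\zeta(s)\beta(s)$ with Theorem~\ref{th:M(A)}, the already-recorded value $I(3)=5/4$, and the subconvexity estimate
\[
\beta(1/2+it)\ll t^{13/84+\eps}.
\]
This last bound is the only genuinely new input. The Dirichlet beta function is the $L$-function of the non-principal character $\chi$ modulo $4$, and on the critical line its size in the $t$-aspect is governed by the twisted sums $\sum_{n\sim N}\chi(n)n^{-it}$, which break into a bounded number of sums over residue classes modulo $4$ and are estimated by exactly the van der Corput / exponent-pair machinery underlying Theorem~\ref{th:M(A)}: any exponent pair $(k,l)$ yields $\beta(1/2+it)\ll t^{(k+l)/2-1/4+\eps}$, and Bourgain's pair $(13/84,55/84)$~\cite{bourgain2017} gives the bound above, just as it does for $\zeta$.

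For the second case, $A\ge D$, I would argue directly: writing $|Z(1/2+it)|^A=|\zeta(1/2+it)|^A|\beta(1/2+it)|^A$ and pulling the pointwise bound for $\beta$ out of the integral,
\[
\int_1^T|Z(1/2+it)|^A\,dt\ll T^{13A/84+\eps}\int_1^T|\zeta(1/2+it)|^A\,dt\ll T^{13A/84+M(A)+\eps}
\]
by \eqref{eq:M-def} and Theorem~\ref{th:M(A)}. This yields $I(A)\le\frac{13}{84}A+M(A)$, which is in fact admissible for every $A\ge4$; in particular $I(D)\le\frac{13}{84}D+M(D)$.

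For the first case, $4\le A\le D$, I would interpolate, using Hölder's inequality, between the value $I(3)=5/4$ (which produces a lower interpolating line than the other available datum $I(2)=1$) and the bound $I(D)\le\frac{13}{84}D+M(D)$ just obtained. Choosing $1/p+1/q=1$ with $3/p+D/q=A$, possible precisely because $3\le A\le D$,
\[
\int_1^T|Z|^A\,dt\le\Bigl(\int_1^T|Z|^3\Bigr)^{1/p}\Bigl(\int_1^T|Z|^D\Bigr)^{1/q}\ll T^{(5/4)/p+(\frac{13}{84}D+M(D))/q+\eps},
\]
so $I(A)$ is at most the ordinate at $A$ of the line through $(3,5/4)$ and $\bigl(D,\frac{13}{84}D+M(D)\bigr)$. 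A short computation puts that line in the form $\bigl(\frac{13}{84}+(M(D)-\frac{8}{21})/D\bigr)A+\frac{8}{21}$, which is the first branch of \eqref{eq:I-estimate}; the constant $D$ is fixed as the abscissa where this line meets the curve $A\mapsto\frac{13}{84}A+M(A)$, equivalently where the two branches of \eqref{eq:I-estimate} coincide, and inserting the explicit $M$ from Theorem~\ref{th:M(A)} gives $D=12.4868\ldots$

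I do not anticipate a real obstacle; the delicate point is this calibration. One must check that the line anchored at $A=3$ re-meets the second-branch curve exactly at the stated $D$, and --- to confirm that the two-piece formula is the sharpest thing the method gives --- that on $[4,D]$ this line stays below $\frac{13}{84}A+M(A)$, for which one uses the piecewise shape of $M$ and the behaviour of $E(A)$ near $A=D$. The subconvexity bound for $\beta$, while indispensable, is completely standard.
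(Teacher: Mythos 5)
Your treatment of the second branch is exactly the paper's argument (pull $|\beta(1/2+it)|^A\ll T^{13A/84+\eps}$ out of the integral and apply Theorem~\ref{th:M(A)}), and your exponent-pair justification of the subconvexity bound for $\beta$ is harmless — the paper simply asserts that bound. The gap is in the first branch. Hölder between the third and the $D$-th moments of $Z$ gives the secant line through $(3,\,5/4)$ and $\bigl(D,\,\frac{13}{84}D+M(D)\bigr)$, and your ``short computation'' identifying this line with $\bigl(\frac{13}{84}+(M(D)-\frac{8}{21})/D\bigr)A+\frac{8}{21}$ is false: the stated line has ordinate $\frac{8}{21}$ at $A=0$, so the two lines coincide only if it also passes through $(3,5/4)$, i.e.\ only if $\frac{13}{84}+(M(D)-\frac{8}{21})/D=\frac{1}{3}\bigl(\frac54-\frac{8}{21}\bigr)=\frac{73}{252}=0.28968\ldots$, equivalently $M(D)=\frac{17}{126}D+\frac{8}{21}\approx 2.0657$. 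For the actual $D=12.4868\ldots$ the slope is $0.2896\ldots<\frac{73}{252}$, so this identity fails. Worse, the failure is in the unfavourable direction: since $D$ is chosen to minimise $(M(D)-\frac{8}{21})/D$, the stated line lies on or below the curve $A\mapsto\frac{13}{84}A+M(A)$ throughout, while your anchor $(3,5/4)$ lies strictly above it; hence every secant from $(3,5/4)$ to a point of that curve lies strictly above the stated line on $[4,D)$. So your interpolation proves a (marginally) weaker bound than \eqref{eq:I-estimate}, not the theorem. Note also that your prescription for $D$ (``where the line meets the second-branch curve'') is vacuous — your line meets that curve at $A=D$ for every choice of $D$; in the paper $D$ is determined by minimising $(M(D)-\frac{8}{21})/D$.

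The paper's route for $4\le A\le D$ works inside the integrand rather than interpolating between moments of $Z$: with $b=4(D-A)/D$ (so $0\le b\le A$ when $A\ge4$) one writes $\int_1^T |Z|^A=\int_1^T\zeta^A\beta^b\beta^{A-b}$, bounds $\beta^{A-b}\ll T^{13(A-b)/84+\eps}$ pointwise, and applies Hölder to $\int\zeta^A\beta^{b}$ with exponents $D/A$ and $D/(D-A)$, so that the two factors are exactly the $D$-th moment of $\zeta$ (estimated by $M(D)$ via \eqref{eq:M-def}) and the fourth moment of $\beta$ ($\ll T^{1+\eps}$, Montgomery). Collecting exponents gives $\bigl(\frac{13}{84}+(M(D)-\frac{8}{21})/D\bigr)A+\frac{8}{21}$, the intercept $\frac{8}{21}=1-4\cdot\frac{13}{84}$ arising from trading the $\beta^4$-moment against the pointwise bound on four powers of $\beta$; this is strictly more efficient than routing the fourth moment of $\beta$ through the datum $I(3)=5/4$ (which also spends the twelfth moment of $\zeta$), and it is what produces the exact line in the statement. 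Your argument could be repaired by replacing the interpolation step with this decomposition; as written, the calibration claim at its centre is incorrect.
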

\begin{proof}
For brevity below
$Z^A$ means $|Z(1/2\hm+it)|^A$,
$\zeta^A$ means $|\zeta(1/2\hm+it)|^A$,
$\beta^A$ means $|\beta(1/2\hm+it)|^A$,
and $\int\cdot$ stands for $\int_1^T \cdot \, dt$.

We start with the first case. Let $D$ be any real greater than $A$.
Then for $b = 4(D-A)/D$ we write
$$
\int Z^A
= \int \zeta^A \beta^A
= \int \zeta^A \beta^b \beta^{A-b}.
$$
Since $\beta \ll T^{13/84+\eps}$ for $t\in[1,T]$ we get
\begin{equation}
\label{eq:Z-moment-1}
\int Z^A
\ll T^{13(A-b)/84 + \eps}
    \int \zeta^A \beta^b.
\end{equation}
Now apply Hölder inequality with $q_1 = D/A$, $q_2 = D/(D-A)$, $1/q_1 \hm+ 1/q_2 = 1$
to obtain
\begin{equation}
\label{eq:Z-moment-2}
\int \zeta^A \beta^b
\ll \left(\int \zeta^D\right)^{A/D} \left(\int \beta^4\right)^{(D-A)/D}
\ll T^{A M(D) / D + \eps} T^{(D-A)/D + \eps}.
\end{equation}
Combination of \eqref{eq:Z-moment-1} and \eqref{eq:Z-moment-2}
provides us with the first statement of the theorem.

It remains to choose $D$ to minimize $(M(D)-{8 \over 21})/D$
and numerical computations by Theorem \ref{th:M(A)} give us $D=12.4868\ldots$

The second statement is almost trivial, since
$$
\int Z^A = \int \zeta^A \beta^A
\ll T^{13A/84+\eps} \int \zeta^A
\ll T^{13A/84+M(A)+\eps}.
$$
\end{proof}

\section{Summatory function of $\taug_k$}

Now we are ready to attack our main aim: the summatory function of $\taug_k$.
Just to get better acquainted we refer readers to Table \ref{t:taug}
for the plot of values of $\taug$.

\begin{table}
\center\begin{tabular}{r|rrrrrrrrrrrrrrr}
15&4&2&8&2&12&4&4&3&8&8&4&4&4&2&16\\
14&2&12&4&6&4&8&4&12&2&8&2&12&4&8&2\\
13&8&2&4&4&4&4&4&2&8&2&8&2&8&4&4\\
12&4&6&4&12&3&12&2&10&6&6&4&12&2&12&4\\
11&4&4&8&2&4&2&8&4&4&4&4&4&8&2&4\\
10&2&8&2&6&6&8&2&6&2&16&4&6&2&8&8\\
9&4&4&8&2&4&4&8&4&6&2&4&6&8&2&8\\
8&4&6&2&10&2&9&2&8&4&6&4&10&2&12&3\\
7&6&2&4&4&4&4&4&2&8&2&8&2&4&4&4\\
6&2&8&4&6&2&8&4&9&4&8&2&12&4&8&4\\
5&4&2&4&2&8&2&4&2&4&6&4&3&4&4&12\\
4&2&6&3&6&2&6&4&10&2&6&2&12&4&6&2\\
3&4&2&4&3&4&4&4&2&8&2&8&4&4&4&8\\
2&2&4&2&6&2&8&2&6&4&8&4&6&2&12&2\\
1&2&2&4&2&4&2&6&4&4&2&4&4&8&2&4\\
0&1&3&2&5&4&6&2&7&3&12&2&10&4&6&8\\\hline
&1&2&3&4&5&6&7&8&9&10&11&12&13&14&15
\end{tabular}
\par~
\caption{Values of $\taug(a+bi)$ in the first quadrant.
Here~$a$ increases horizontally and $b$ vertically. Numbers with exactly two divisors
are Gaussian primes.}
\label{t:taug}
\end{table}

Similar to the real case $\sum_{n>0} \tau_k(n) n^{-s} = \zeta^k(s)$,
one can check that
$$
\sum_{\alpha\ne0 \atop \arg \alpha \in [0,\pi/2)} \taug_k(\alpha) N(\alpha)^{-s}
= Z^k(s).
$$

\begin{theorem}\label{th:taug}
Let $S_k(x) = \sum_{0<N\alpha \le x \atop \arg \alpha \in [0,\pi/2)} \taug_k(\alpha)$.
Then
\begin{align}
\label{eq:taug-prediction}
S_2(x) &= l_1 x \ln x + l_2 x + O(x^{1/2+\eps}),
\\
\nonumber
S_3(x) &= x P_3(\log x) + O(x^{3/5+\eps}),
\\
\nonumber
S_k(x) &= x P_k(\log x) + O(x^{1-1/2I(k)+\eps}).
\end{align}
where
\begin{align*}
l_1 & = \pi^2 / 16 = 0.61685\ldots,
\\
l_2 & = \pi^2 (2\gamma - 1) / 16 + \pi \beta'(1) / 2 = 0.39827\ldots
\\
\deg P_k &= k - 1.
\end{align*}
\end{theorem}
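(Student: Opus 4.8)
The plan is to treat $S_k(x)$ by the analytic (Perron–Mellin) method, combining the factorisation $Z(s)=\zeta(s)\beta(s)$ with the moment bound $I(k)$. Write $Z^k(s)=\sum_{n\ge1}b_n n^{-s}$, where $b_n=\sum_{N\alpha=n,\ \arg\alpha\in[0,\pi/2)}\taug_k(\alpha)\ge0$. Since the Dirichlet coefficients of $\beta$ are bounded in absolute value by those of $\zeta$, one gets $b_n\le\tau_{2k}(n)\ll n^\eps$; moreover $Z^k(s)$ is holomorphic for $\sigma\ge1/2$ apart from a pole of order exactly $k$ at $s=1$, because $\zeta$ has a simple pole there and $\beta(1)=\pi/4\ne0$. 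The truncated Perron formula with $c=1+1/\log x$ then gives
$$
S_k(x)=\frac1{2\pi i}\int_{c-iT}^{c+iT}Z^k(s)\frac{x^s}{s}\,ds+O\!\left(\frac{x^{1+\eps}}{T}\right),\qquad 2\le T\le x.
$$

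I would then shift the contour to the critical line $\sigma=1/2$. The pole at $s=1$ contributes $\res_{s=1}\bigl(Z^k(s)x^s/s\bigr)$, which equals $x$ times a polynomial in $\log x$ of degree exactly $k-1$ and leading coefficient $(\pi/4)^k/(k-1)!\ne0$; this is the main term $xP_k(\log x)$ with $\deg P_k=k-1$. For $k=2$, substituting $Z(s)=\frac{\pi/4}{s-1}+(\gamma\pi/4+\beta'(1))+O(s-1)$ and $x^s/s=x\bigl(1+(s-1)(\log x-1)+O((s-1)^2)\bigr)$ and reading off the coefficient of $(s-1)^{-1}$ yields exactly $l_1x\log x+l_2x$ with $l_1=(\pi/4)^2=\pi^2/16$ and $l_2=\pi^2(2\gamma-1)/16+\pi\beta'(1)/2$, as claimed.

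It remains to bound the integral along the shifted contour. By partial summation from $\int_1^U|Z(1/2+it)|^k\,dt\ll U^{I(k)+\eps}$, the vertical segment at $\sigma=1/2$ is
$$
\ll x^{1/2}\int_1^T|Z(1/2+it)|^k\,\frac{dt}{t}+x^{1/2}\ll x^{1/2}\,T^{I(k)-1+\eps}
$$
(for $k\ge3$ we have $I(k)>1$; for $k=2$ one has $I(2)=1$, the integral is $\ll T^\eps$, and the bound reads $\ll x^{1/2+\eps}$). On the horizontal segments at heights $\pm T$, the portion with $\sigma\in[1,c]$ contributes $\ll x^{1+\eps}/T$ via $Z(\sigma+it)\ll\log^{O(1)}t$, while on the portion with $\sigma\in[1/2,1]$ I would apply the Phragmén–Lindelöf bound $Z(\sigma+it)\ll|Z(1/2+it)|^{2-2\sigma}t^\eps$ to reduce the contribution to
$$
\ll\frac{x^{1+\eps}}{T}\int_0^{1/2}\Bigl(\,|Z(1/2+iT)|^{2k}/x\,\Bigr)^{u}\,du.
$$
The delicate point is that $T$ must be chosen so that $|Z(1/2+iT)|\le x^{1/(2k)}$, which forces this last integral to be $O(1)$; since Chebyshev's inequality together with the moment bound shows that the measure of $T\in[T_0,2T_0]$ with $|Z(1/2+iT)|>V$ is $\ll T_0^{I(k)+\eps}V^{-k}$, almost every $T\in[T_0,2T_0]$ satisfies $|Z(1/2+iT)|\ll T_0^{(I(k)-1)/k+\eps}$, and with the eventual choice of $T_0$ this is $\ll x^{1/(2k)}$. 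Hence the horizontal contribution is $\ll x^{1+\eps}/T_0$.

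Finally I would optimise the truncation height: the choice $T_0=x^{1/(2I(k))}$ balances the Perron error $x^{1+\eps}/T_0$, the vertical integral $x^{1/2}T_0^{I(k)-1+\eps}$ and the horizontal contribution $x^{1+\eps}/T_0$, each of which becomes $O(x^{1-1/(2I(k))+\eps})$; this yields $S_k(x)=xP_k(\log x)+O(x^{1-1/(2I(k))+\eps})$. For $k=3$, $I(3)=5/4$ gives the exponent $1-1/(2\cdot5/4)=3/5$; for $k=2$ one takes $T_0=x^{1/2}$ and uses $I(2)=1$ to get $O(x^{1/2+\eps})$. The main obstacle is the horizontal segment: a plain convexity bound for $Z(\sigma+iT)$ there is far too lossy, so the moment input $I(k)$ must be used a second time — via the Chebyshev averaging over the dyadic window $[T_0,2T_0]$ — to select a height at which $Z(1/2+iT)$ is small.
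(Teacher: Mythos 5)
Your proposal follows essentially the same route as the paper: truncated Perron formula for $Z^k(s)=\zeta^k(s)\beta^k(s)$, the residue at the order-$k$ pole at $s=1$ giving $xP_k(\log x)$ (your $k=2$ expansion reproducing $l_1=\pi^2/16$ and $l_2=\pi^2(2\gamma-1)/16+\pi\beta'(1)/2$ is correct and matches the paper's), the moment bound $I(k)$ on the line $\sigma=1/2$ producing the term $x^{1/2}T^{I(k)-1+\eps}$, and the choice $T=x^{1/(2I(k))}$ giving the exponent $1-1/(2I(k))$, with $I(2)=1$ and $I(3)=5/4$ yielding the two explicit cases. You fill in details the paper suppresses, since the paper simply asserts the Perron error $O(xT^{-1}+x^{1/2}T^{I(k)-1+\eps})$.

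The one step that does not work as written is your treatment of the horizontal segments. The inequality $Z(\sigma+it)\ll|Z(1/2+it)|^{2-2\sigma}t^{\eps}$ is not a legitimate form of Phragmén–Lindelöf: three-lines/convexity arguments control an interior value at height $t$ by boundary values at \emph{nearby} heights, not by the single boundary value directly below it (the function can be small at $1/2+it$ yet large at $1/2+i(t\pm\tfrac12)$, forcing the interior value to be large). Your Chebyshev step does not rescue this: the measure bound $\ll T_0^{I(k)+\eps}V^{-k}$ for $\{T\in[T_0,2T_0]:|Z(1/2+iT)|>V\}$ shows most heights are good, but it does not furnish a $T$ with an entire unit window of small boundary values, which is what any corrected convexity lemma would need. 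The standard repair — in effect what the paper implicitly relies on, cf. Ivić, Ch.~13 — is to bound the horizontal contribution by its average over $T\in[T_0,2T_0]$ and to estimate $\int_{T_0}^{2T_0}|Z(\sigma+it)|^k\,dt$ for each fixed $\sigma\in[1/2,1]$ via Gabriel's convexity theorem between the lines $\sigma=1/2$ (where the $I(k)$ bound applies) and $\sigma=1+\delta$; this gives a horizontal contribution $\ll x^{1/2}T_0^{I(k)-2+\eps}+xT_0^{-1+\eps}$, which with $T_0=x^{1/(2I(k))}$ is $\ll x^{1-1/(2I(k))+\eps}$, so the theorem's exponents are unaffected. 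With that substitution your argument is complete and coincides with the paper's.
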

\begin{proof}
By Perron formula we have
$$
S_k(x) = \res_{s=1} Z^k(s) x^s / s + O(xT^{-1} + x^{1/2} T^{I(k)-1 + \eps}).
$$
Here the residues give the main term of form $x P_k(\log x)$. Let us analyse the case
of $k=2$ in details. We have
\begin{align*}
\res_{s=1} Z^2(s) x^s / s
&= \res_{s=1} (\zeta^2(s)/s \cdot \beta^2(s) \cdot x^s)
= \\
&= \res_{s=1}
  \left( {1\over(s-1)^2} + {2\gamma-1\over s-1} + O(1) \right)
\times \\ &\times
  \left( {\pi\over4} + {\pi \beta'(1) \over2} (s-1) + O(s-1)^2 \right)^2
\times \\ &\times
  \left( x + x \log x (s-1) + O(s-1)^2 \right)
= \\ &= {\pi^2 \over 16} x \ln x
+ \left({{\pi^2 (2\gamma - 1) \over 16} + {\pi \beta'(1) \over 2}}\right) x.
\end{align*}

With regards to the error term let us choose $T=x^a$ in order to minimize
the magnitude of
$O(xT^{-1} + x^{1/2} T^{I(k)-1 + \eps})$. One can check
that $T=x^{1/2I(k)}$ turns this expression into $O(x^{1-1/2I(k)+\eps})$.

Error terms for $k=2$ and $k=3$ are consequences of estimates for~$I(2)$ and $I(3)$
obtained in the previous section.
\end{proof}

Theorem \ref{th:taug} improves results of Thin \eqref{eq:thin1} and \eqref{eq:thin2}
for all $k$. Cases $k=2$ and $k=3$ were given above.
For $4 \le k \le 12$ by \eqref{eq:M-estimate} and \eqref{eq:I-estimate}
we have
$$
2I(k) \le 0.2896 k + 0.3806 < k < k + 1.
$$
For $13 \le k$ we have
$$
2I(k) \le 2(1 + 13/84(2k-6)) = 52/84k + 1/7 < k < k + 1.$$

It is interesting to check the accuracy of our asymptotic estimate on some
numerical data. Table \ref{t:sum-of-taug} shows values of summatory function
for growing $x$, compared against predictions from \eqref{eq:taug-prediction}. The last
column lists error terms divided by $x^{1/2}$ and fuels our confidence
in the correct order of $O$-bound.

\begin{table}
\center\begin{tabular}{rrrr}
$x$ & $S(x)$
& $\bar S(x)$
& $S(x)-\bar S(x) \over \sqrt x$
\\\hline
10 & 24 & 18 & 1.84 \\
100 & 337 & 324 & 1.31 \\
1000 & 4694 & 4659 & 1.10 \\
10000 & 60857 & 60797 & 0.60 \\
100000 & 750259 & 750002 & 0.81 \\
1000000 & 8920571 & 8920371 & 0.20 \\
10000000 & 103407407 & 103407214 & 0.06 \\
100000000 & 1176104936 & 1176107167 & $-0.22$ \\
1000000000 & 13181419972 & 13181421920 & $-0.06$ \\
\end{tabular}
\par~
\caption{Values of $S(x) = \sum_{0<N\alpha \le x \atop \arg \alpha \in [0,\pi/2)} \taug(\alpha)$
and estimates $\bar S(x)$ by \eqref{eq:taug-prediction}.}
\label{t:sum-of-taug}
\end{table}

\bibliographystyle{ugost2008s}
\bibliography{taue}

\end{document}